\documentclass[12pt,a4paper]{amsart}
\usepackage{amscd,amsfonts,amsmath,amsthm,amssymb,verbatim,mathrsfs}
\usepackage[dvips]{graphicx}
\usepackage{graphics,hyperref}
\usepackage{enumitem}
\usepackage{psfrag}
\usepackage{pstricks}
\usepackage{latexsym}
\usepackage{multicol}

\psset{unit=0.7cm,linewidth=0.8pt,arrowsize=2.5pt 4}

\newpsstyle{fatline}{linewidth=1.5pt}
\newpsstyle{fyp}{fillstyle=solid,fillcolor=verylight}
\definecolor{verylight}{gray}{0.97}
\definecolor{light}{gray}{0.9}
\definecolor{medium}{gray}{0.85}


%
%
%
\def\NZQ{\mathbb}               
\def\NN{{\NZQ N}}

\def\ZZ{{\NZQ Z}}

%
%
\def\frk{\frak}               

\def\Phi{{\frk n}}
\def\Phi{{\frk N}}
%

%
\def\ub{{\bold u}}
\def\vb{{\bold v}}
\def\wb{{\bold w}}
\def\ab{\mathbf{a}}

\def\tb{{\bold t}}

\def\cb{\mathbf{c}}

\def\opn#1#2{\def#1{\operatorname{#2}}} 
%
\opn\chara{char} \opn\length{\ell} \opn\pd{pd} \opn\rk{rk}
\opn\projdim{proj\,dim} \opn\injdim{inj\,dim} \opn\rank{rank}
\opn\depth{depth} \opn\grade{grade} \opn\height{height}
\opn\embdim{emb\,dim} \opn\codim{codim} \opn\sgn{sgn}

\opn\Tr{Tr} \opn\bigrank{big\,rank}
\opn\superheight{superheight}\opn\lcm{lcm}
\opn\trdeg{tr\,deg}
\opn\reg{reg} \opn\lreg{lreg} \opn\ini{in} \opn\lpd{lpd}
\opn\size{size}\opn\bigsize{bigsize}
\opn\cosize{cosize}\opn\bigcosize{bigcosize}
\opn\sdepth{sdepth}\opn\sreg{sreg}
\opn\link{link}\opn\fdepth{fdepth}
\opn\Graver{Graver}
%
\opn\div{div} \opn\Div{Div} \opn\cl{cl} \opn\Cl{Cl} \opn\Cor{Cor}
%
%
\opn\Spec{Spec} \opn\Supp{Supp} \opn\supp{supp} \opn\Sing{Sing}
\opn\Ass{Ass} \opn\Min{Min}\opn\Mon{Mon} \opn\dstab{dstab} \opn\astab{astab}
%
%
\opn\Ann{Ann} \opn\Rad{Rad} \opn\Soc{Soc} \opn\Gr{Gr}
%
%
\opn\Im{Im} \opn\Ker{Ker} \opn\Coker{Coker} \opn\Am{Am}
\opn\Hom{Hom} \opn\Tor{Tor} \opn\Ext{Ext} \opn\End{End}
\opn\Aut{Aut} \opn\id{id} \opn\span{span}

\opn\nat{nat}
\opn\pff{pf}
\opn\Pf{Pf} \opn\GL{GL} \opn\SL{SL} \opn\mod{mod} \opn\ord{ord}
\opn\Gin{Gin} \opn\Hilb{Hilb}\opn\sort{sort} \opn\Gale{Gale}
%
%
\opn\aff{aff} \opn\conv{conv} \opn\relint{relint} \opn\st{st}   \opn\cone{cone}
\opn\lk{lk} \opn\cn{cn} \opn\core{core} \opn\vol{vol}
\opn\link{link} \opn\star{star}\opn\lex{lex} \opn\Gr{Gr}
\opn\gr{gr}

%
%

\def\pot#1#2{#1[\kern-0.28ex[#2]\kern-0.28ex]}

%
%
\opn\dirlim{\underrightarrow{\lim}}
\opn\inivlim{\underleftarrow{\lim}}
%
%
%

%
%

\def\Implies{\ifmmode\Longrightarrow \else
        \unskip${}\Longrightarrow{}$\ignorespaces\fi}
\def\implies{\ifmmode\Rightarrow \else
        \unskip${}\Rightarrow{}$\ignorespaces\fi}
\def\iff{\ifmmode\Longleftrightarrow \else
        \unskip${}\Longleftrightarrow{}$\ignorespaces\fi}

\let\:=\colon
\newtheorem{Theorem}{Theorem}[section]

\newtheorem{Proposition}[Theorem]{Proposition}
\newtheorem{Remark}[Theorem]{Remark}

\newtheorem{Example}[Theorem]{Example}

\newtheorem{Definition}[Theorem]{Definition}

%
%
\let\epsilon\varepsilon
\let\kappa=\varkappa
%
%
\textwidth=15cm \textheight=22cm \topmargin=0.5cm
\oddsidemargin=0.5cm \evensidemargin=0.5cm \pagestyle{plain}
\makeatletter


%
%
\def\qed{\ifhmode\textqed\fi
      \ifmmode\ifinner\quad\qedsymbol\else\dispqed\fi\fi}
\def\textqed{\unskip\nobreak\penalty50
       \hskip2em\hbox{}\nobreak\hfil\qedsymbol
       \parfillskip=0pt \finalhyphendemerits=0}
\def\dispqed{\rlap{\qquad\qedsymbol}}

%
\opn\dis{dis}
\def\pnt{{\raise0.5mm\hbox{\large\bf.}}}

\opn\Lex{Lex}



\begin{document}


\title{On the dimension of the strongly robust complex for configurations in general position}
\author[1]{ Dimitra Kosta }
\author[2] {Apostolos Thoma}
\author[3] {Marius Vladoiu}
\thanks{Corresponding author: Dimitra Kosta}
\address{Dimitra Kosta, School of Mathematics, University of Edinburgh and Maxwell Institute for Mathematical Sciences, United Kingdom }
\email{D.Kosta@ed.ac.uk}

\address{Apostolos Thoma, Department of Mathematics, University of Ioannina, Ioannina 45110, Greece}
\email{athoma@uoi.gr}

\address{Marius Vladoiu, Faculty of Mathematics and Computer Science, University of Bucharest, Str. Academiei 14, Bucharest, RO-010014, Romania}
\email{vladoiu@fmi.unibuc.ro}

\subjclass[2020]{ 05E45, 13F65, 13P10, 14M25}
\keywords{ Toric ideals,  Graver basis, cyclic polytopes, indispensable elements, Robust ideals, Simplicial complex}

\begin{abstract}  Strongly robust toric ideals are the toric ideals for which the set of indispensable binomials is the Graver basis. The strongly robust
simplicial complex $\Delta _T$ of a simple toric ideal $I_T$  determines the strongly robust property for
all toric  ideals that have $I_T$ as their bouquet ideal. We prove that  $\dim \Delta_T<\rank(T)$ for configurations in general position,
partially answering a question posed by Sullivant. 
\end{abstract}
\maketitle

\section{Introduction}

  A toric ideal is  {\it strongly robust}  if the
following sets are identical: the set of indispensable binomials, any minimal system of binomial generators, any reduced Gr{\"o}bner basis, the universal Gr{\"o}bner basis and the Graver basis (see \cite{S}). Well known classes  of strongly robust ideals are the Lawrence ideals, see \cite[Chapter 7]{St}, and the toric ideals of non pyramidal self dual projective varieties, see \cite{BDR, TV}.
 There are several articles  in the literature studying   strongly robust toric ideals,
\cite{ BBDLMNS, GT,   GP, KTV, KTV2, NR, PTV, PTV2, TV, St,  S}. Strongly robust toric ideals are of  importance in algebraic statistics as they provide examples of toric ideals satisfying the distance reducing property (\cite{AokiTake}). Another noteworthy property of strongly robust ideals generated by quadrics is that they are examples of Koszul algebras \cite{GT}. 

A key concept in understanding the strongly robust property for toric ideals is the notion of a bouquet,
which was developed by Petrovi{\' c} et al.  in \cite{PTV}. Bouquets are connected components of a graph and are of three types: mixed, non-mixed and free. In  \cite{KTV} Kosta et al. indroduced the  strongly robust simplicial complex $\Delta_T$ which characterizes the strongly robust property of toric ideals that have in common the same bouquet ideal $I_T$. In particular, let $I_A$ be a toric ideal with bouquet ideal $I_T$, the ideal $I_A$ is strongly robust if and only
if the set $\omega$ of indices $i$, such that the $i$-th bouquet of $I_A$ is non-mixed,
is a face of  $\Delta_T$, see \cite[Theorem 3.6]{KTV}. Thus, understanding the strongly robust property of toric ideals $I_A$ is equivalent to understanding the strongly robust simplicial complex $\Delta_T$ for
simple toric ideals $I_T$. Simple toric ideals are ideals for which every bouquet is a 
singleton.  For each simple toric ideal $I_T$ for which the strongly robust complex is 
known one can construct infinite classes of strongly robust toric ideals that 
have in common the same bouquet ideal, $I_T$, using the theory of generalized Lawrence matrices developed in \cite[Section 2]{PTV} For details, see Section~\ref{dimHIGH}.

In \cite{PTV}, Petrovi{\' c} et al. observed that a special class of strongly robust toric ideals, the Lawrence ideals  (see ~\cite[Chapter 7]{St}), contain only mixed bouquets and proved (\cite{PTV}[Corollary 4.4] that if every non-free bouquet of a toric ideal $I_A$ is mixed then $I_A$ is strongly robust.  They also constructed several other examples of strongly robust ideals having both mixed and non mixed bouquets, but never all non-mixed.  Motivated by this, they asked whether every strongly robust toric ideal $I_A$ necessarily admits a mixed bouquet. In \cite{S}, Sullivant proved this conjecture for codimension 2 toric ideals by proving that codimension 2 toric ideals have at least two mixed bouquets and reformulated the question as follows: does every strongly robust toric ideal $I_A$ of codimension $r$ have at least $r$ mixed bouquets? Since bouquets preserve codimension, Sullivant's question is equivalent to a question about the dimension of the strongly robust simplicial complex of its bouquet ideal $I_T$: If $s$ is the number of bouquets of $I_A$, is it true that simple toric ideals $I_T$ of codimension $r$ in the polynomial ring of $s$ variables have $\dim \Delta_T<s-r=\rank (T)$? In \cite{KTV2}, Kosta et al. provide a positive resolution to Sullivant’s question in the case of simple toric ideals of monomial curves, by proving that $\dim \Delta_T$ is strictly less than one which is the $\rank (T)$ for $T$ an $1\times n$ matrix defining a monomial curve, $n\ge 3$. In Section~\ref{dimComplex}, we extend this result by giving an affirmative answer to Sullivant’s question for simple toric ideals of configurations in general position with Theorem~\ref{dimension}.

The structure of the paper is the following. In Section~\ref{section:prelim}, we present the necessary prerequisites on simple toric ideals and the strongly robust complex, and establish some of their basic properties that will be used throughout the remaining of the paper.  Section~\ref{dimComplex}, includes the main result of the paper Theorem~\ref{dimension}, which, for configurations in general position, bounds the dimension of the strongly robust complex $\Delta_T$  by the rank of the matrix $T$. Finally, in Section~\ref{dimHIGH} we produce families of examples of strongly robust ideals
with bouquet ideal the ideal of a configuration in general position by using a specific type of configuration in general position, that of cyclic configurations.

 \section{Preliminaries} \label{section:prelim}
Let $A=(\mathbf{a}_1, \ldots, \mathbf{a}_n)$ be an integer matrix in $\mathbb{Z}^{m \times n}$, with column vectors $\mathbf{a}_1,\ldots,\mathbf{a}_n$ and such that $\Ker_{\ZZ}(A)\cap \NN^n=\{{\mathbf{0}}\}$.
The toric ideal of $A$ is the ideal $I_A\subset K[x_1,\ldots,x_n]$
  generated by the binomials ${x}^{\mathbf{u}^+}-{x}^{{\mathbf{u}}^-}$ 
 where $K$ is a field, ${\mathbf{u}}\in\Ker_{\mathbf{Z}}(A)$
and ${\bf u}={\bf u}^+-{\bf u}^-$ is the unique expression of ${\bf u}$ as a difference of two nonnegative vectors with disjoint support; see \cite[Chapter 4]{St}.
 
Let $\mathbf{u}, \mathbf{v}, \mathbf{w}\in  \Ker_\mathbb{Z}(A)$, we say that $\mathbf{u}=\mathbf{v}+_{c} \mathbf{w}$ is a \textit{conformal decomposition} of the 
vector $\mathbf{u}$
 if ${\bf u}={\bf v}+{\bf w}$ and ${\bf u}^+={\bf v}^++{\bf w}^+, {\bf u}^-={\bf v}^-+{\bf w}^-$. The conformal decomposition is called proper if both
 ${\bf v}$ and  ${\bf w}$ are not zero.
For the conformality, in terms of signs  coordinate-wise, the corresponding notation is the following:
    $+ = \oplus +_{c} \oplus$, 
    $- = \ominus+_{c} \ominus$,
    $ 0 \ = \ 0 +_{c}  0 $.
 where the symbol $\ominus $ means that the corresponding integer is nonpositive and the symbol $\oplus $ nonnegative.
By $\Gr(A)$, we denote the set of elements in $\Ker_\mathbb{Z}(A)$ that do not have a proper conformal decomposition. 
 A binomial ${\bf x}^{{\bf u}^+}-{\bf x}^{{\bf u}^-}\in I_A$
 is called \textit{primitive} if ${\bf u}\in \Gr(A).$ The set of the
primitive binomials is finite and it is called the \textit{Graver basis} of $I_A$ and is denoted by $\Gr(I_A)$, \cite[Chapter 4]{St}. 

We recall from \cite[Definition 3.9]{HS} that for vectors ${\bf u},{\bf v},{\bf w}\in\Ker_{\mathbb{Z}}(A)$ such that ${\bf u}={\bf v}+{\bf w}$, the sum is said to be a \textit{semiconformal decomposition} of ${\bf u}$, written ${\bf u}={\bf v}+_{sc} {\bf w}$, if $v_i>0$ implies that $w_i\geq 0$, and $w_i<0$ implies that $v_i\leq 0$, for all $1\leq i\leq n$.  The decomposition is called {\it proper} if both ${\bf v}, {\bf w}$ are nonzero. The set of \textit{indispensable elements} $S(A)$ of $A$ consists  of all nonzero vectors in $\Ker_{\ZZ}({A})$ with no proper semiconformal decomposition. 
For the semiconformality, in terms of signs coordinate-wise, the corresponding notation is the following:
 $\oplus = * +_{sc} \oplus $, $ \ominus =  \ominus +_{sc}  * $, where the symbol $*$ means that it can take any value.

 A binomial ${\bf x}^{{\bf u}^+}-{\bf x}^{{\bf u}^-}\in I_A$
 is called \textit{indispensable} binomial if it belongs to the intersection of all minimal systems of binomial generators of $I_A$,  up to identification of opposite binomials. The set of indispensable binomials is  $S(I_A)=\{{\bf x}^{{\bf u}^+}-{\bf x}^{{\bf u}^-}|{\bf u}\in S(A)\}$ by \cite[Lemma 3.10]{HS} and \cite[Proposition 1.1]{CTV}.

  {\em Circuits} are irreducible binomials of a toric ideal $I_A$ with minimal support. In vector notation, a vector ${\bf u}\in \Ker_{\ZZ}(A)$ is called a circuit of the matrix $A$ if supp$({\bf u})$ is minimal and the components of ${\bf u}$ are relatively prime.

To the vectors $\mathbf{a}_1,\ldots,\mathbf{a}_n$ one can associate the oriented vector matroid $M_A$ (see \cite{BLSWZ} for details). The support of a vector ${\bf v}\in \ZZ^n$ is the set supp$({\bf v})=\{i|v_i\not=0\}\subset \{1, \dots, n\}$. A {\em co-vector} is any  vector of the form $({\bf u} \cdot {\bf a}_1, \dots,{\bf u} \cdot {\bf a}_n)$, where  ${\bf u}\in \ZZ^m$. A {\em co-circuit} of $A$ is any non-zero co-vector of minimal support.  A co-circuit with support of cardinality one is called a {\em co-loop}. We call the vector ${\bf a}_i$ {\em free}  if $\{i\}$ is the support of a co-loop.  A free vector ${\bf a}_i$  belongs to any basis of the matroid $M_A$. 

 Let $E_A$ be the set consisting of elements of the form $\{ {\bf a}_i, {\bf a}_j\}$ such that there exists a co-vector ${\bf c}_{ij}$ with support  $\{i, j\}$. We denote by $E_A^+$  the subset of $E_A$ where the co-vector is a co-circuit and the signs of the two nonzero components of ${\bf c}_{ij}$ are distinct, and we denote by $E_A^-$ the subset of $E_A$ where the co-vector is a co-circuit and the signs of the two nonzero components of ${\bf c}_{ij}$ are the same. Furthermore, we denote by $E_A^0$ the subset of $E_A$ where the co-vector is not a co-circuit. This implies that both ${\bf a}_i$ and ${\bf a}_j$ are free vectors. The three sets $E_A^+,  E_A^-,  E_A^0$ partition $E_A$.

 The {\em bouquet graph} $G_A$ of $I_A$ is the graph whose vertex set  is $\{{\bf a}_1,\dots, {\bf a}_n\}$ and edge set  $E_A$.  The {\em bouquets} of $A$ are the connected components of $G_A$.  If there are free vectors in $A$ they form one bouquet with all edges in $E_A^0$.  A non-free bouquet is called {\em mixed} if it contains at least an edge from $E_A^-$, and {\em non-mixed} if it is either an isolated vertex or all of its edges are from $E_A^+$.

The vectors in the same bouquet $B_i$ have parallel Gale transforms. The relations between the Gale transforms of the vectors of the same bouquet $B_i$ define a vector $c_{B_i}$. These vectors $c_{B_i}$ together with the elements ${\bf a}_j\in B_i$ define a vector $\ab_{B_i}$ for each bouquet $B_i$.    
Let $A_B$  be the matrix with columns the vectors $\ab_{B_i}$, $i\in [s]=\{j| 1\leq j\leq s\}\subset \mathbb{Z}$, then the toric ideal $I_{A_B}$ is called  the bouquet ideal  of $A$, for details see \cite[Section 1]{PTV}.

 A toric ideal is called \textit{simple} if every bouquet is a singleton, in other words if $I_T\subset K[x_1,\dots , x_s]$ and has $s$ bouquets. Note that the bouquet ideal is simple. The bouquet ideal of a simple toric ideal $I_A$ is $I_A$. Non principal toric
ideals of monomial curves  are simple, see \cite{KTV2}.

 \begin{Definition}
 \label{TomegaIdeal}
 Let $I_T\subset K[x_1,\dots , x_s]$ be a simple toric ideal, $T=[\tb_1,\ldots,\tb_s]$ and $\omega \subset \{1, \dots , s\}$. A toric ideal $I_A$
 is called $T_{\omega}$-ideal if and only if
 \begin{itemize}
  \item the bouquet ideal of $I_A$ is $I_T$ and
  \item $\omega =\{i\in [s]| B_i \ \text{is non-mixed}\}.$
 \end{itemize}
 \end{Definition}

By $\Lambda (T)$ we denote the second Lawrence lifting of $T$, which is the $(m+s)\times 2s$ matrix
${\begin{pmatrix} T & 0 \\
I_s & I_s 
\end{pmatrix}}.$ The map $D: \Ker_{\mathbb{Z}}(T)\longrightarrow\Ker_{\mathbb{Z}}(\Lambda (T))$ given by $D(\ub)=(\ub,-\ub)$ defines an isomorphism. By $\Lambda (T)_{\omega}$ we denote the matrix taken from $\Lambda (T)$ by removing the $(m+i)$-th row and the $(s+i)$-th column for each $i\in \omega$. In the case that $T$ has no free vector, the ideal $I_{\Lambda (T)_{\omega}}$ is a $T_{\omega}$-ideal and it has $s$ bouquets, the $|\omega|$ are non-mixed and the $s-|\omega|$ are mixed. By $|\omega|$ we denote the cardinality of the set $\omega$. The map $D_{\omega}: \Ker_{\mathbb{Z}}(T)\longrightarrow\Ker_{\mathbb{Z}}(\Lambda (T))_{\omega}$ given by $D_{\omega}(\ub)=(\ub,-[{\bf u}]^{\omega})$ defines an isomorphism,
 where $[{\bf u}]^{\omega}$ is the vector ${\bf u}$ with the $i^{th}$ component missing, for $i \in \omega$. The map $D_{\omega}$ provides a bijective correspondence between the Graver basis
 of $T$ and the Graver basis of $\Lambda (T)_{\omega}$, see \cite[Theorem 1.1]{PTV}. Explicitly: $\Gr(\Lambda (T)_{\omega})=\{D_{\omega}(\ub)|\ub \in \Gr(T) \}.$

 The next proposition generalizes Proposition 4.1 of \cite{KTV2}.

 \begin{Proposition} \label{conformal} Let $T$ be a simple configuration and  ${\bf u},{\bf v},{\bf w}\in\Ker_{\mathbb{Z}}(T)$. If  $D_{\omega}({\bf u})=D_{\omega}({\bf v})+_{sc}D_{\omega}({\bf w})$ in $\Ker_{\mathbb{Z}}\left(\Lambda (T)_{\omega}\right)$, then $[{\bf u}]^{\omega}=[{\bf v}]^{\omega}+_{c}[{\bf w}]^{\omega}$, where $[{\bf u}]^{\omega}$ is the vector ${\bf u}$ with the $i^{th}$ component missing, for $i \in \omega$. 
\end{Proposition}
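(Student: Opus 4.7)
The plan is to unwind two definitions, that of $D_\omega$ and that of semiconformal decomposition, and verify conformality coordinate-by-coordinate for indices outside $\omega$. Since $D_\omega(\ub)=(\ub,-[\ub]^\omega)\in\Ker_{\ZZ}(\Lambda(T)_\omega)$, its coordinates are naturally indexed by $[s]\sqcup([s]\setminus\omega)$: the first block records $\ub$ itself, and the second block records $-u_i$ for $i\notin\omega$. Apply the same description to $D_\omega(\vb)$ and $D_\omega(\wb)$; note that the three vectors automatically satisfy $\ub=\vb+\wb$ (and hence the restricted identity $[\ub]^\omega=[\vb]^\omega+[\wb]^\omega$) simply by projecting the semiconformal sum onto the first $s$ coordinates.

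The heart of the argument is to read the semiconformal hypothesis $D_\omega(\ub)=D_\omega(\vb)+_{sc}D_\omega(\wb)$ on each block separately. Using the sign rules $\oplus = * +_{sc} \oplus$ and $\ominus = \ominus +_{sc} *$, the first block yields, for every $i\in[s]$, the implications $v_i>0\Rightarrow w_i\geq 0$ and $w_i<0\Rightarrow v_i\leq 0$. On the second block, the coordinates have been negated, so for each $i\in[s]\setminus\omega$ the same rules applied to $-v_i$ and $-w_i$ translate to $v_i<0\Rightarrow w_i\leq 0$ and $w_i>0\Rightarrow v_i\geq 0$.

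For any $i\notin\omega$ both sets of implications apply simultaneously, and together they rule out the possibility that $v_i$ and $w_i$ have strictly opposite signs in position $i$: whenever one of them is strictly positive the other is nonnegative, and whenever one is strictly negative the other is nonpositive. This is precisely the coordinate-wise conformal condition $+ = \oplus +_{c} \oplus$, $- = \ominus +_{c} \ominus$, $0 = 0 +_{c} 0$ for index $i$. Assembling these pointwise assertions over $i\notin\omega$ gives $[\ub]^\omega=[\vb]^\omega+_{c}[\wb]^\omega$, as required.

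There is no substantive technical obstacle; the argument is a direct sign-case unpacking, and the only real observation one must make is the structural one that for each $i\notin\omega$ both $u_i$ and $-u_i$ appear among the coordinates of $D_\omega(\ub)$. Semiconformality is a one-sided condition, but imposing it simultaneously on a coordinate and its negative is equivalent to imposing conformality on that coordinate, which is exactly why the projection $[\,\cdot\,]^\omega$ upgrades from semiconformal to conformal while the deleted coordinates (those indexed by $\omega$) are left with no constraint.
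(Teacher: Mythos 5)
Your proof is correct and follows essentially the same route as the paper's: both arguments observe that for each index $i\notin\omega$ the coordinates $u_i$ and $-u_i$ both occur in $D_\omega(\ub)$, and that semiconformality imposed simultaneously on a coordinate and its negation forces $v_i$ and $w_i$ to have weakly equal signs, which is exactly coordinate-wise conformality. The only cosmetic difference is that the paper organizes the case analysis by the sign of $u_j$ while you argue by excluding strictly opposite signs of $v_i$ and $w_i$; the content is identical.
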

\begin{proof}
    Let $j \in [s] \text{ such that } j\not\in \omega$. Then, for the vector $D_{\omega}({\bf u})$ in the kernel $\Ker_{\mathbb{Z}}\left(\Lambda (T)_{\omega}\right)$, one of the components is equal to $u_j$ and another is $-u_j$. Similarly, the corresponding two components of each of $D_{\omega}({\bf v})$ and $ D_{\omega}({\bf w}) \in \Ker_{\mathbb{Z}}\left(\Lambda (T)_{\{i\}}\right)$ are 
    $v_j, -v_j$ and $w_j, -w_j$ respectively. The semiconformal decomposition $D_{\omega}({\bf u})=D_{\omega}({\bf v})+_{sc}D_{\omega}({\bf w})$, implies that  on those components we have 
   \begin{eqnarray}\label{sc_1}
 (u_j) & = & (v_j)+_{sc} (w_j), \hspace{0.5cm} \\
\label{sc_2}
   (-u_j) & = & (-v_j)+_{sc} (-w_j). \hspace{0.5cm}
  \end{eqnarray}

    If $u_j\geq 0$, then the signs of (\ref{sc_1}) are  $\oplus = * +_{sc} \oplus $ and the signs of (\ref{sc_2}) are $ \ominus =  \ominus +_{sc}  * $ then $w_j\geq 0$, while $-v_j\leq 0$. Therefore, both $v_j, w_j$ are non-negative and so the sum $(u_j)=(v_j)+_{c} (w_j)$ is conformal. If on the other hand $u_j\leq 0$, then the signs of (\ref{sc_1}) are $ \ominus =  \ominus +_{sc}  * $  and the signs of (\ref{sc_2}) are $\oplus = * +_{sc} \oplus $ then $v_j\leq 0$  and $-w_j\geq 0$. Therefore, both $v_j, w_j$ are non-positive and the sum $(u_j)=(v_j)+_{c} (w_j)$
    is again conformal.  \qed
    
\end{proof}

In \cite{KTV}, Kosta et al.  introduced a simplicial complex, which determines the strongly robust property for toric ideals. In the sence that if you have a simple toric ideal $I_T$
for which you know the simplicial complex then you can construct infinitely many strongly robust toric ideals. And if you have any strongly robust toric ideal $I_A$ then there exists a simple toric ideal $I_T$ such that the bouquet ideal of $I_A$ is $I_T$ and the set of indices 
$\omega$ such that the corresponding bouquet of $I_A$ is non-mixed belongs to the strongly robust simplicial complex of $T$.

To simplify the presentation of the current article, we give an equivalent but simpler definition of this simplicial complex, based on \cite[Theorems 3.6, 3.7]{KTV}.

\begin{Definition}  The set $\omega$ belongs to the simplicial complex $\Delta _T$ if and only if $I_{\Lambda (T)_{\omega}}$ is strongly robust.
\end{Definition}

 The set $\Delta _T$ is called the {\it strongly robust complex} of $T$ and according to \cite[Corollary 3.5]{KTV}, $\Delta _T$ is a simplicial complex. The $\Delta _T$ determines the strongly robust property for toric ideals,  by \cite[Theorem 3.6]{KTV}, since  the toric ideal $I_A$ is strongly robust if
and only if $\omega$ is a face of the strongly robust complex  $\Delta _T$. This means
also that if a $T_{\omega}$-ideal $I_A$ is strongly robust then all $T_{\omega}$-ideals
are strongly robust.

Given two simplicial complexes $\Delta '\subset \Delta$, we say that $\Delta '$ is an induced subcomplex of $\Delta$
if every simplex in $\Delta$ with all vertices in $\Delta '$  is a simplex in $\Delta '$  as well. In particular, if $\Delta$ is a simplicial complex with vertex set $\Sigma$ and $\sigma\subset \Sigma$ we say that $[\Delta ]_{\sigma}=\{\omega\cap \sigma|\omega \in \Delta\}$
 is the induced simplicial subcomplex of $\Delta $ on $\sigma $.

     \begin{Proposition} \label{subcomplex}
         Let $T'=\{\tb _i|i\in \sigma \subset [s]\}\subset T$ and both $T', T$ be simple configurations. Let $[\Delta _T]_{\sigma}$ be
         the induced simplicial subcomplex of $\Delta _T$ on $\sigma $, then $$[\Delta _T]_{\sigma}\subseteq \Delta _{T'}.$$
     \end{Proposition}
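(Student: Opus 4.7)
The plan is first to reduce to the sharper statement: if $\omega \subseteq \sigma$ and $\omega \in \Delta_T$, then $\omega \in \Delta_{T'}$. A generic element of $[\Delta _T]_{\sigma}$ has the form $\omega' = \omega \cap \sigma$ with $\omega \in \Delta_T$, and since $\omega' \subseteq \omega$ and $\Delta_T$ is a simplicial complex, $\omega'$ is already a face of $\Delta_T$; replacing $\omega$ by $\omega'$, we may thus assume $\omega \subseteq \sigma$ from the start.

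Next, I would introduce the zero-padding embedding $\iota\colon \Ker_\ZZ(T') \hookrightarrow \Ker_\ZZ(T)$ which inserts a zero in every position outside $\sigma$, and verify that $\iota(\Gr(T')) \subseteq \Gr(T)$. Indeed, if $\iota(\ub') = \vb +_c \wb$ were a proper conformal decomposition in $\Ker_\ZZ(T)$, then sign-matching would force $\vb$ and $\wb$ to vanish on $[s]\setminus\sigma$; hence both take the form $\iota(\vb')$ and $\iota(\wb')$ for some $\vb', \wb' \in \Ker_\ZZ(T')$, producing a proper conformal decomposition of $\ub'$ and contradicting $\ub' \in \Gr(T')$.

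The main step is to show that every element of $\Gr(\Lambda(T')_\omega)$ is indispensable. By the Graver correspondence \cite[Theorem 1.1]{PTV}, every such element has the form $D_\omega(\ub')$ for some $\ub' \in \Gr(T')$. Assume for contradiction that $D_\omega(\ub')$ admits a proper semiconformal decomposition in $\Ker_\ZZ(\Lambda(T')_\omega)$. Applying the isomorphism $D_\omega$, this decomposition can be written as
\[
D_\omega(\ub') = D_\omega(\vb'_1) +_{sc} D_\omega(\vb'_2)
\]
with $\vb'_1, \vb'_2 \in \Ker_\ZZ(T')$ nonzero. Zero-padding each $\vb'_i$ via $\iota$ and applying $D_\omega$ in the larger configuration $T$ produces the analogous equality in $\Ker_\ZZ(\Lambda(T)_\omega)$. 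On the coordinates indexed by $\sigma$ (original part) and $\sigma\setminus\omega$ (Lawrence part), the semiconformal sign conditions carry over verbatim; on the remaining coordinates, which (using $\omega \subseteq \sigma$) are all indexed by $[s]\setminus\sigma$ in both the original and Lawrence parts, all three padded vectors vanish, so semiconformality is trivially satisfied and the decomposition remains proper. But by the previous paragraph $\iota(\ub') \in \Gr(T)$, hence $D_\omega(\iota(\ub')) \in \Gr(\Lambda(T)_\omega)$; since $\omega \in \Delta_T$ makes $I_{\Lambda(T)_\omega}$ strongly robust, this element must be indispensable, contradicting the proper semiconformal decomposition we have exhibited.

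The main obstacle is the bookkeeping of the two types of coordinates in a Lawrence lifting: for $\Lambda(T)_\omega$ the Lawrence columns are indexed by $[s]\setminus\omega$, whereas for $\Lambda(T')_\omega$ they are indexed by $\sigma\setminus\omega$. The initial reduction $\omega \subseteq \sigma$ is precisely what makes these two indexing sets differ by $[s]\setminus\sigma$, so that zero-padding on the original part and on the Lawrence part aligns cleanly and preserves the semiconformal sign conditions.
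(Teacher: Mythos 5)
Your proof is correct and follows essentially the same strategy as the paper's: zero-pad a putative proper semiconformal decomposition from $\Ker_{\mathbb{Z}}(\Lambda(T')_{\omega})$ into $\Ker_{\mathbb{Z}}(\Lambda(T)_{\omega})$, observe that the padded Graver element stays in $\Gr(\Lambda(T)_{\omega})$ (the paper cites Proposition 4.13 of \cite{St} where you verify the padding lemma directly), and derive a contradiction with the indispensability forced by $\omega\in\Delta_T$. Your preliminary reduction to $\omega\subseteq\sigma$ and the explicit bookkeeping of the two index sets of Lawrence columns are welcome clarifications of details the paper leaves implicit, but they do not change the argument.
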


     \begin{proof}
         Let $\omega\cap \sigma$ be an element of the induced simplicial subcomplex $[\Delta _T]_{\sigma}$ of $\Delta _T$ on $\sigma$, where $\omega \in \Delta _T$.  We claim that $\omega\cap \sigma \in \Delta _{T'}$. If $\omega\cap \sigma $ were not a face of $\Delta _{T'}$, then $I_{\Lambda (T')_{\omega\cap \sigma}}$ would not be strongly robust by definition. That means there would exist an element $\ub \in \Gr (\Lambda (T')_{\omega\cap \sigma}) $ which would not be indispensable in $\ker_{\mathbb{Z}} (\Lambda (T')_{\omega\cap \sigma}) $.  This would mean that $\ub$ has a proper semi-conformal decomposition $\ub = \vb +_{sc} \wb$, where $\ub, \vb, \wb \in \ker_{\mathbb{Z}}(\Lambda (T')_{\omega\cap \sigma})$. 
         For an element $\ub\in \ker_{\mathbb{Z}} (\Lambda (T')_{\omega\cap \sigma}) $ we denote $\tilde{\ub} = (\ub, 0) $ an element $ \ker_{\mathbb{Z}}(\Lambda (T)_{\omega}) $ with $\tilde{u}_i=u_i$ if $i\in \sigma$
         and $\tilde{u}_i=0$ if $i\in [s]-\sigma$.
         Then the element $\tilde{\ub} = (\ub, 0) \in Gr(\Lambda (T)_{\omega}) $ by Proposition 4.13 in \cite{St}. Let $\tilde{\vb} = (\vb, 0)$ and $\tilde{\wb} = (\wb, 0)$ then $\tilde{\ub} = \tilde{\vb} +_{sc} \tilde{\wb} $ is a proper semi-conformal decomposition.  However, since $\omega \in \Delta_T$, we have that the ideal $I_{\Lambda (T)_{\omega}}$ is strongly robust by the definition of the strongly robust complex. Then the element $\tilde{\ub}$ of the Graver basis $\Gr(\Lambda (T)_{\omega})$ is indispensable and so $\tilde{\ub}$ cannot have a proper semi-conformal decomposition, which is a contradiction. Thus $\omega\cap \sigma \in \Delta _{T'}$.
     \end{proof}

In \cite[Corollary 1.3]{S}, Sullivant proved that strongly robust codimension 2 toric ideals 
 have at least 2 mixed bouquets. For the strongly robust complex $\Delta _T$, this result means that $\dim(\Delta _T) < s-2.$ If Sullivant's conjecture holds, namely that for every simple codimension $r$ toric ideal $I_T$ we have $\dim(\Delta_T) < s-r = \rank(T)$, then the following example shows that $\rank(T) - 1$ is the best possible upper bound for each $m = \rank(T)$. 
 \begin{Example}
     {\em Let 
     $ n_1, n_2,n_3$ be such that  $I_{(n_1, n_2,n_3)}$ is a  complete intersection on $n_3$, that means that $I_{(n_1, n_2,n_3)}$ is complete intersection and  $c_1n_1= c_2n_2\not =c_3n_3$, see \cite[Definition 3.1]{KTV2}. Namely, $I_{(n_1, n_2,n_3)}$ is minimally generated by two binomials with  different Betti degrees. By \cite[Theorem 4.8]{KTV2}, we have  $\Delta _{(n_1, n_2,n_3)}=\{\emptyset, \{3\}\}$. Let $T$ be the $m\times 3m$ matrix of rank $m$ with $t_{i,3i-2}=n_1$, $t_{i,3i-1}=n_2$, 
     $t_{i,3i}=n_3$ for $1\leq i\leq m$ and all the other $t_{i,j}=0. $ Thus, every column has one non zero element. We claim that 
     $I_T$ is simple. Simple toric ideals are those that all their bouquets are singletons and by the definition of the bouquet two  $\tb _j$, $\tb _k$  belong to the same bouquet if there exists a covector $\cb \in \mathbb{Z}^m$ such that $\cb \cdot \tb _j\not=0$, $\cb \cdot \tb _k\not=0$ and
     $\cb \cdot \tb _l=0$ for all other $l$. But note that, for any $\cb \in \mathbb{Z}^m$,
     if one of $\cb \cdot \tb _{3i-2}=c_in_1$, $\cb \cdot \tb _{3i-1}=c_in_2$, $\cb \cdot \tb _{3i}=c_in_3$  
     is different from zero, then all are different from zero. Therefore, there are not two vectors in the same bouquet and, thus, all bouquets are singletons and $I_T$ is simple. 
     
     Next, we claim that $\Delta _T=3\cdot 2^{[m]}$, where $2^{[m]}$ is the set of all subsets of $[m]$. Note that if $\ub \in \Ker_{\mathbb{Z}}T$,  then $(u_{3i-2}, u_{3i-1}, u_{3i})\in \Ker_{\mathbb{Z}}(n_1, n_2,n_3)$ for all $i$. We claim that if $\ub \in \Gr (T)$, then there exist an $i$ such that $(u_{3i-2}, u_{3i-1}, u_{3i})\in \Gr (n_1, n_2,n_3)$ and for all $j\not=i$ we have $(u_{3j-2}, u_{3j-1}, u_{3j})=(0,0,0)$. Suppose that there exist 
$j\not=i$ such that $(u_{3i-2}, u_{3i-1}, u_{3i})\not=(0,0,0)$, $(u_{3j-2}, u_{3j-1}, u_{3j})\not=(0,0,0)$. Let $\vb\in \Ker_{\mathbb{Z}}T$ be the element with
$v_{3i-2}=u_{3i-2}, v_{3i-1}= u_{3i-1}, v_{3i}=u_{3i}$ and all other components zero. Then the sum $\ub=\vb+(\ub-\vb)$ is a proper conformal decomposition of $\ub$ since 
$\vb \not= {\bf 0} \not= (\ub-\vb)$ and the sums $0+x, x+0$ are always conformal. Thus for $\ub \in \Gr (T)$ we have that there exist an $i$ such that $(u_{3i-2}, u_{3i-1}, u_{3i})\in \Ker_{\mathbb{Z}}(n_1, n_2,n_3)$ and for all $j\not=i$ we have $(u_{3j-2}, u_{3j-1}, u_{3j})=(0,0,0)$. The claim  that $(u_{3i-2}, u_{3i-1}, u_{3i})\in \Gr (n_1, n_2,n_3)$ follows by Proposition 4.13 in \cite{St}.

     Let $\omega \in \Delta _T$ and $T_i=\{\tb_{3i-2}, \tb_{3i-1}, \tb_{3i}\}$, 
     then $T_i$ is simple. Note that $\Delta _{T_i}=\{\emptyset, \{3i\}\}$, since $\Delta _{(n_1, n_2,n_3)}=\{\emptyset, \{3\}\}$. Therefore, by Proposition \ref{subcomplex}, if 
     $\sigma=\{3i-2, 3i-1, 3i\}$, then $\omega\cap \sigma=\{3i\}$ or $\omega\cap \sigma=\emptyset$. Thus, $\omega \in 3\cdot 2^{[m]}$. So $\Delta _T\subset 3\cdot 2^{[m]}$.

Let $\Omega =3\cdot [m]$. An element in $ \Gr(\Lambda(T)_{\Omega})$  is in the form $D_{\Omega}(\ub)$ for an element  $\ub \in \Gr (T)$, by Theorem 1.11 of \cite{PTV}.  The  form of the elements of $ \Gr (T)$ implies that  $D_{\Omega}(\ub)=(\ub,-[{\bf u}]^{\Omega})=$ $$=(0,0,0,\cdots, 0,0,0, u_{3i-2},u_{3i-1},u_{3i},0,0,0,\cdots, 0,0, -u_{3i-2},-u_{3i-1},0,0, \cdots, 0,0), $$ for exactly one $i$ and  $(u_{3i-2}, u_{3i-1}, u_{3i})\in \Gr (n_1, n_2,n_3)$. 

Then $$( u_{3i-2},u_{3i-1},u_{3i}, -u_{3i-2},-u_{3i-1})\in Gr(\Lambda(T_i)_{\{3\}}) $$ by Proposition 4.13 in \cite{St}. 
     Since the toric ideal $I_{\Lambda(T_i)_{\{3\}}}$ is strongly robust we have that
      $( u_{3i-2},u_{3i-1},u_{3i}, -u_{3i-2},-u_{3i-1}) $
 is indispensable in $\ker(\Lambda(T_i)_{\{3\}})$.
     Then we claim that $D_{\Omega}(\ub)$
 is indispensable in     $\Ker{(}\Lambda(T)_{\Omega})$. 
 
 Suppose that $D_{\Omega}(\ub)=D_{\Omega}(\vb)+_{sc}D_{\Omega}(\wb)$ for some $\vb, \wb \in \ker(T)$. Then by Proposition \ref{conformal} we have $[{\bf u}]^{\Omega}=[{\bf v}]^{\Omega}+_{c}[{\bf w}]^{\Omega}$.
 
 But $\ub=(0,0,0,\cdots, 0,0,0, u_{3i-2},u_{3i-1},u_{3i},0,0,0,\cdots, 0,0,0), $
     therefore $[{\bf u}]^{\Omega}=(0,0,,\cdots, 0,0,, u_{3i-2},u_{3i-1},0,0,,\cdots, 0,0). $
 But the only conformal representation of $0$ is $0+0$ thus we conclude that 
     $[{\bf v}]^{\Omega}=(0,0,\cdots, 0,0,, v_{3i-2},v_{3i-1},0,0,\cdots, 0,0) $ and
     $[{\bf w}]^{\Omega}=(0,0,,\cdots, 0,0, w_{3i-2},w_{3i-1},0,0,\cdots, 0,0). $
     Then for $j\not=i$ we have $(v_{3j-2},v_{3j-1},v_{3j})=(0,0,v_{3j})$ and
     $(w_{3j-2},w_{3j-1},w_{3j})=(0,0,w_{3j})$. 
     
     Since both $(v_{3j-2},v_{3j-1},v_{3j}), (w_{3j-2},w_{3j-1},w_{3j})$ belong to $\Ker_{\mathbb{Z}}(n_1, n_2,n_3)$ we have $v_{3j}=w_{3j}=0$. But then from $D_{\Omega}(\ub)=D_{\Omega}(\vb)+_{sc}D_{\Omega}(\wb)$
     we have
     $$ ( u_{3i-2},u_{3i-1},u_{3i}, -u_{3i-2},-u_{3i-1})=$$ $$= ( v_{3i-2},v_{3i-1},v_{3i}, -v_{3i-2},-v_{3i-1})+_{sc}( w_{3i-2},w_{3i-1},w_{3i}, -w_{3i-2},-w_{3i-1}).
     $$
But $( u_{3i-2},u_{3i-1},u_{3i}, -u_{3i-2},-u_{3i-1}) $
 is indispensable in $\ker(\Lambda(T_i)_{\{3\}})$ therefore one of $\vb$, $\wb$ is zero.
 Therefore $D_{\Omega}(\ub)$ does not have a proper semiconformal decomposition and thus it
 is indispensable in     $\Ker{(}\Lambda(T)_{\Omega})$. We conclude that
 $\Lambda(T)_{\Omega}$ is strongly robust and thus $\Omega \in\Delta _T$. 
 
 Thus $\Delta _T$ is the m-simplex $3\cdot 2^{[m]}$ and so $\dim(\Delta _T)=m-1=\rank(T)-1$.
    Thus for any $m=\rank(T)$ we can find a simple toric ideal $I_T$ such that  $\dim(\Delta _T)=\rank(T)-1$. 
}
     
 \end{Example}

 \section{Configurations in general position and  the dimension of the strongly robust complex}
\label{dimComplex}

The main result of the article is 
 Theorem \ref{dimension} of this section which confirms Sullivant's conjecture for
 the simple toric ideals of configurations in general position.
 
 We consider configurations of vectors $A=\{a_1, \dots, a_n\}\subset \mathbb{Z}^d$ such that the cone $pos_{\mathbb{Q}}(A)$ has a vertex. In this case $\Ker_{\ZZ}(A)\cap \NN^n=\{{\mathbf{0}}\}$. Note that if $\Ker_{\ZZ}(A)\cap \NN^n\not=\{{\mathbf{0}}\}$ the sets: any minimal system of binomial generators, any reduced Gr{\"o}bner basis, the universal Gr{\"o}bner basis and the Graver basis can never be simultaneously 
 equal by \cite[Theorem 4.18]{CTV1} and thus the toric ideal $I_A$ cannot be strongly robust. 

\begin{Definition} A configuration $A=\{a_1, \dots, a_n\}\subset \mathbb{Z}^d$  is called in general position if every
  $d$ elements are linearly independent in $\mathbb{Q}^d$, where $n\ge d+2$. 
    
\end{Definition}  

Note that toric ideals of monomial curves correspond to configurations in general position. 
Another famous class of configurations in general position are the cyclic configurations, see \cite{Z} and the next section.

Next theorem proves that configurations in general position have the property that they are simple. And since subsets with more than or equal to $d+2$ elements are also in general position they are also simple. A configuration $A$ is {\em simple} if the corresponding toric ideal $I_A$ is simple.

  \begin{Theorem} \label{simple}
      Let $A$ be a configuration in general position then every subset $B$ of $A$ with more than or equal to $d+2$ elements is simple.
  \end{Theorem}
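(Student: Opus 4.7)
The plan is to show that under the general position hypothesis, the bouquet graph $G_B$ has no edges at all, which immediately forces every connected component, i.e. every bouquet, to be a singleton.

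First I would note that general position is hereditary: any subset $B\subset A$ with $|B|\geq d+2$ is itself a configuration in general position, because the linear independence of any $d$ vectors from $A$ is inherited by any subfamily. So I may work directly with the vectors of $B$ and consider the bouquet graph $G_B$.

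Next I would argue by contradiction. Suppose $\{a_i,a_j\}\in E_B$ for some $i\neq j$. By the definition of $E_B$ recalled in Section~\ref{section:prelim}, there exists a co-vector $\cb \in \mathbb{Z}^d$ with
\[
\cb\cdot a_i\neq 0,\qquad \cb\cdot a_j\neq 0,\qquad \cb\cdot a_\ell=0 \text{ for all } \ell\in B\setminus\{i,j\}.
\]
Since $|B|\geq d+2$, the set $B\setminus\{a_i,a_j\}$ contains at least $d$ vectors. Pick any $d$ of them: by the general position assumption they are linearly independent in $\mathbb{Q}^d$, hence they span $\mathbb{Q}^d$. But then $\cb$ is orthogonal to a spanning set of $\mathbb{Q}^d$, forcing $\cb=0$, which contradicts $\cb\cdot a_i\neq 0$.

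Therefore $E_B=\emptyset$, so $G_B$ consists only of isolated vertices; every connected component is a single vertex, and $B$ is simple. There is no substantial obstacle here: the statement is essentially a dimension count built into the definition of general position, with the only thing to check being that a co-vector supported on two indices would have to annihilate $d$ general position vectors and hence vanish.
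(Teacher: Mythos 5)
Your argument is correct and is essentially the paper's proof: both reduce to the observation that a co-vector supported on $\{i,j\}$ must annihilate the remaining $\geq d$ vectors of $B$, which by general position span $\mathbb{Q}^d$, forcing the co-vector to vanish. The paper phrases this as "no hyperplane can miss only two points of $B$" while you phrase it as orthogonality to a spanning set, but the dimension count is identical.
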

  \begin{proof} Suppose that there is a subset $B$ with at least $d+2$ elements which is not simple. This would mean that there is one bouquet of the configuration $B$ which is not a singleton. Let $a_i, a_j$ be two different elements of this bouquet. Then there exist a covector with support $i, j$.
  This means that there is a hyperplane $H_{ij}$ that passes through all other elements of $B$ except for these two  $a_i, a_j$. Let $H$ be any hyperplane in $\mathbb{Q}^d$, then it contains at most $d-1$ elements from $B$, since any $d$ elements span the whole space as they are linearly independent. Thus outside any hyperplane $H$ there are three or more elements of $B$, which contradicts the existence of the hyperplane $H_{ij}$. Thus all bouquets of $B$ are singletons and so $B$ is simple. 
      
  \end{proof}

  \begin{Remark} {\em The proof of Theorem \ref{simple} shows that toric ideals
   of a configuration in general position do not have free vectors, since there is no cocircuit with a single support. Thus also toric ideals with bouquet ideal the toric ideal of a configuration in general position  do not have  a free bouquet.}
     
  \end{Remark}

The following Theorem generalizes  Theorem 3.4 of \cite{KTV2}, since $T=(n_1, \dots ,n_s)$, $ s\ge 3$ defines a configuration in general position. Even more answers affirmatively Sullivant's question for toric ideals of a configuration in general position.
The proof is based on Proposition \ref{subcomplex} and on Sullivant's main result in \cite{S}, Corollary 1.3.
\begin{Theorem} \label{dimension} Let $T$ be a configuration in general position. For the simple toric ideal $I_T$, we have that $\dim(\Delta _T)<\rank(T).$
\end{Theorem}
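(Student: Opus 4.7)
The plan is to argue by contradiction using Sullivant's codimension~2 result via the induced subcomplex machinery of Proposition~\ref{subcomplex}. Write $T=\{\tb_1,\dots,\tb_s\}\subset\ZZ^d$, so that general position gives $s\ge d+2$ and $\rank(T)=d$. Suppose for contradiction that $\dim(\Delta_T)\ge d$, so there is a face $\omega\in\Delta_T$ with $|\omega|\ge d+1$. The goal is to restrict to a carefully chosen subconfiguration of size $d+2$, whose toric ideal has codimension exactly $2$, and then invoke \cite[Corollary~1.3]{S} to derive a contradiction.

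First I would fix a subset $\omega'\subseteq\omega$ with $|\omega'|=d+1$. Since $s\ge d+2$, there exists some $j\in[s]\setminus\omega'$; set $\sigma:=\omega'\cup\{j\}$, which has $|\sigma|=d+2$. Let $T':=\{\tb_i\mid i\in\sigma\}$ be the corresponding subconfiguration. Any $d$ of its vectors are linearly independent in $\QQ^d$ (this property is inherited from $T$), so $T'$ is itself a configuration in general position; in particular $\rank(T')=d$. By Theorem~\ref{simple}, the configuration $T'$ is simple, which makes Proposition~\ref{subcomplex} applicable.

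Next, Proposition~\ref{subcomplex} gives the inclusion $[\Delta_T]_\sigma\subseteq\Delta_{T'}$. Because $\omega\in\Delta_T$ and $\omega\supseteq\omega'$, the intersection $\omega\cap\sigma$ contains $\omega'$, hence has at least $d+1$ elements. Consequently $\dim(\Delta_{T'})\ge d$. On the other hand, $T'$ consists of $d+2$ vectors of rank $d$, so the simple toric ideal $I_{T'}$ is of codimension $2$. By Sullivant's theorem, as recalled in the paper just before Theorem~\ref{dimension}, every face of $\Delta_{T'}$ has strictly fewer than $(d+2)-2=d$ elements, i.e.\ $\dim(\Delta_{T'})<d$. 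This contradicts the previous inequality and completes the argument.

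There is essentially no hard step: once the right subconfiguration of size $d+2$ is isolated, Sullivant's codimension~2 result does all the work. The only things to verify carefully are that general position descends to subsets of size $\ge d+2$, that $T'$ is therefore simple so that Proposition~\ref{subcomplex} applies, and that $\omega\cap\sigma$ really witnesses a face of dimension $\ge d$ in $\Delta_{T'}$; these are the bookkeeping points I would make explicit in writing the proof.
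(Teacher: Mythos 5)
Your proof is correct and follows essentially the same route as the paper: restrict to a $(d+2)$-element subconfiguration $T'$ containing a $(d+1)$-element face, use Theorem~\ref{simple} and Proposition~\ref{subcomplex} to transport that face into $\Delta_{T'}$, and contradict Sullivant's codimension~2 bound (the paper phrases the contradiction as $I_{\Lambda(T')_{\omega}}$ having only one mixed bouquet, which is the same count). One small wording slip: Sullivant gives that faces of $\Delta_{T'}$ have \emph{at most} $d$ elements (not strictly fewer than $d$), but this still yields $\dim(\Delta_{T'})<d$, which is all you use.
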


\begin{proof} Suppose that $\dim(\Delta _T)\ge d$, then $\Delta _T$ contains a face $\omega$ of cardinality $d+1$.  
Consider any other element $\tb_{i}\in T, i\notin \omega$ and consider the  configuration 
$T'=\{ \tb _j|j\in \sigma=\omega\cup \{i\}\}$. The configuration $T'$ has $d+2$ elements thus it is simple by Theorem \ref{simple}.  By Proposition \ref{subcomplex}, we have that $[\Delta _T]_{\sigma}\subseteq \Delta _{T'}.$
But $\omega \in \Delta _T$ and $\omega\subset \sigma$ thus $\omega \in \Delta _T'$. Thus $I_{\Lambda (T')_{\omega}}$ is strongly robust.
The ideal $I_{\Lambda (T')_{\omega}}$ is of codimension 2 and has only one mixed bouquet, the one corresponding to $\tb_{i}$, since in $\sigma$ the only element not in $\omega$ is $i$. By Corollary 1.3 \cite{S} if a codimension 2 toric ideal $I_A$ is strongly robust then $A$ has at least two
mixed bouquets.
Therefore the ideal $I_{\Lambda (T')_{\omega}}$ is not strongly robust,  a contradiction. \qed

\end{proof}

Note that configurations in general position are simple thus  all their bouquets are non-mixed, since they are singletons. Then a toric ideal $I_T$
of a configuration in general position is a $T_{[s]}$-toric ideal, where $s$ is the cardinality of $T$. If $I_T$
was strongly robust then $\omega=[s]\in \Delta _T$ thus $s-1\leq \dim(\Delta _T)<\rank(T)=d\leq s-2,$ which would be a contradiction. Therefore
one of the implications of Theorem \ref{dimension} is that toric ideals of configurations in general position are never strongly robust. 
 This observation gives an affirmative answer to a question posed by  Petrovi{\' c} et al. \cite{PTV},  concerning whether every strongly robust toric ideal $I_A$ must necessarily admit a mixed bouquet in the
  case of configurations in general position.

\section{Toric ideals of Cyclic configurations as bouquet ideals}
\label{dimHIGH}

Although, as we saw at the end of the previous section, toric ideals of configurations in
general position are never strongly robust, the knowledge of the strongly robust comlex $\Delta_{T}$  and the theory of generalized Lawrence matrices developed in \cite[Section 2]{PTV} provide a way to produce families of examples of strongly robust toric ideals that have bouquet ideal the ideal $I_T$ of a configuration $T$ in general position. Take, for example, the matrix 
\[ 
T^5_{[7]}=\left( \begin{array}{ccccccc}
 1 & 1 & 1 & 1 & 1 & 1 & 1 \\ 1 & 2 & 3 & 4 & 5 & 6 & 7\\ 1 & 4 & 9 & 16 & 25 & 36 & 49 \\
 1 & 8 & 27 & 64 & 125 & 216 & 343\\
 1 & 16 & 81 & 256 & 625 & 1296 & 2401 
\end{array} \right). \]

The set of columns of this matrix is a particular case of a cyclic configuration.  Cyclic configurations are well known for their extremal properties, see \cite{MM, Z}. 
Let $A$ be a cyclic configuration formed by the columns of the $d\times n$  matrix \[ 
\left( \begin{array}{cccc}
  a(t_1) & a(t_2) & \hdots & a(t_{n}) 
\end{array} \right), \] where $ a(t)=
\left( \begin{array}{ccccc}
 1 & t & t^2 & \dots & t^{d-1}
\end{array} \right)^T \in \mathbb{Z}^{d}$, $t_1< t_2< \cdots <t_{n}$ are integers and $n\ge d+2$.
  Then any subset $B=\{a(t_{i_1}), \cdots, a(t_{i_d})\}$ of $A$ consisting of $d$ vectors is linearly independent, since  
the determinant of the  Vandermonde matrix with columns the elements of $B$ is given by $\prod_{1 \leq l<j \leq d}(t_{i_j}-t_{i_l})$, which is different from zero. Thus, cyclic configurations are configurations in general position and thus they define simple toric ideals.

Using $4ti2$, see \cite{4ti2}, we can compute the toric ideals $I_{\Lambda (T^5_{[7]})_{\{2\}}}, I_{\Lambda (T^5_{[7]})_{\{6\}}}, I_{\Lambda (T^5_{[7]})_{\{1,3,4,5,7\}}}$. The first two are not strongly robust, thus, $\{2\}, \{6\}$ do not belong to $\Delta _{T^5_{[7]}}$. The third ideal is strongly robust, thus, $\{1, 3, 4, 5,  7\}\in \Delta _{T^5_{[7]}}$. We conclude that 
 the simplicial complex $\Delta_{T^5_{[7]}}$ is a simplex with vertices $\{1, 3, 4, 5,  7\}.$
Thus $\dim(\Delta _{T^5_{[7]}})=4,$ which as we saw in Theorem \ref{dimension} is the maximal possible among matrices of rank 5.

Now, we follow the construction of generalized Lawrence matrices developed in \cite[Section 2]{PTV}, where one can find the details of the construction.
Choose seven integer vectors
	 of any dimension each as following.  All of the seven vectors should have full support, each vector should have the greatest common divisor of all of its components equal to $1$, and all seven vectors should have a positive first component, while the second and the sixth vector should have at least one negative component. For example, choose ${\bf c}_1=(7, 1, 2027)$, 
     ${\bf c}_2=(1, -1)$,  ${\bf c}_3=(1)$,  ${\bf c}_4=(2, 3, 7)$, 
     ${\bf c}_5=(11, 1)$,  ${\bf c}_6=(4, -1, -27)$,  and ${\bf c}_7=(1)$. For each vector  
${\bf c}_i=(c_{i1},\ldots,c_{im_i})\in\ZZ^{m_i}$, $1\leq i\leq 7$, choose 
integers $\lambda_{i1},\ldots,\lambda_{im_i}$ such that $1=\lambda_{i1}c_{i1}+\cdots+\lambda_{im_i}c_{im_i}$.  Then, the generalized Lawrence matrix
\[
{\footnotesize A=\
\left( \begin{array}{ccccccccccccccc}
 0 & 1 & 0 &  1 & 0 & 1 & -1 & 1 & 0 & 0 & 1  & 0 & -1 & 0 & 1 \\
 0 & 1 & 0 &  2 & 0 & 3 & -4 & 4 & 0 & 0 & 5  & 0 & -6 & 0 & 7 \\
 0 & 1 & 0 &  4 & 0 & 9 & -16 & 16 & 0 & 0 & 25  & 0 & -36 & 0 & 49 \\
 0 & 1 & 0 &  8 & 0 & 27 & -64 & 64 & 0 & 0 & 125  & 0 & -216 & 0 & 343 \\
 0 & 1 & 0 &  16 & 0 & 81 & -256 & 256 & 0 & 0 & 625  & 0 & -1296 & 0 & 2401 \\
 -1 & 7 & 0 &  0 & 0 & 0 & 0 & 0 & 0 & 0 & 0  & 0 & 0 & 0 & 0 \\
 -2027 & 0 & 7 &  0 & 0 & 0 & 0 & 0 & 0 & 0 & 0  & 0 & 0 & 0 & 0 \\
 0 & 0 & 0 &  1 & 1 & 0 & 0 & 0 & 0 & 0 & 0  & 0 & 0 & 0 & 0 \\
 0 & 0 & 0 &  0 & 0 & 0 & -3 & 2 & 0 & 0 & 0  & 0 & 0 & 0 & 0 \\
 0 & 0 & 0 &  0 & 0 & 0 & -7 & 0 & 2 & 0 & 0  & 0 & 0 & 0 & 0 \\
 0 & 0 & 0 &  0 & 0 & 0 & 0 & 0 & 0 & -1 & 11  & 0 & 0 & 0 & 0 \\
 0 & 0 & 0 &  0 & 0 & 0 & 0 & 0 & 0 & 0 & 0  & 1 & 4 & 0 & 0 \\
 0 & 0 & 0 &  0 & 0 & 0 & 0 & 0 & 0 & 0 & 0  & 27 & 0 & 4 & 0 \\
\end{array} \right) } , 
\]

defines a toric ideal $I_A$ with bouquet ideal $I_{T^5_{[7]}}$ by Theorem 2.1 of \cite{PTV}. Note that the columns of $A$ that correspond to the same vector ${\bf c}_i$ belong to the same i-th bouquet; if ${\bf c}_i$  has a negative and a positive component then the i-th bouquet is mixed and  if ${\bf c}_i$ has all componenets positive then the i-th bouquet is non-mixed, see \cite[Lemma 1.6]{PTV}. According to Theorem 3.6 of \cite{KTV}, the toric ideal $I_A$ is strongly robust, as $I_A$ is a $T^5_{{[7]}_{\{1,3,4,5,7 \}}}$-toric ideal (see Definition~\ref{TomegaIdeal}) and  $\{1, 3, 4, 5,  7\}\in \Delta _{T^5_{[7]}}$.

Corollary 2.3 of \cite{PTV}
asserts that  all  toric ideals with bouquet ideal $I_{T^5_{[7]}}$ are obtained in this way, for some appropriate seven vectors ${\bf c}_1, 
     {\bf c}_2, {\bf c}_3,  {\bf c}_4, 
     {\bf c}_5,  {\bf c}_6,  {\bf c}_7$. Actually, Corollary 2.3 of \cite{PTV} combined with Theorem 3.6 of \cite{KTV},
asserts that in fact all strongly robust toric ideals with bouquet ideal $I_{T^5_{[7]}}$ are obtained this way, for some  vectors ${\bf c}_1, 
     {\bf c}_2, {\bf c}_3,  {\bf c}_4, 
     {\bf c}_5,  {\bf c}_6,  {\bf c}_7$ with a positive first component, the rest of the components of ${\bf c}_1, 
      {\bf c}_3,  {\bf c}_4, 
     {\bf c}_5,  {\bf c}_7$  being either positive or negative and the $
     {\bf c}_2, {\bf c}_6$  having at least one negative component. In this way, the set $\omega$ of indices that correspond to non-mixed bouquets is a subset of $\{1,3,4,5,7\}$, thus, belongs to $\Delta_{T^5_{[7]}}$.

\section*{Acknowledgements}
D. Kosta gratefully acknowledges funding from the Royal Society Dorothy Hodgkin Research Fellowship DHF$\backslash$R1$\backslash$201246.

\end{document}